\DeclareMathOperator{\high}{high}
\DeclareMathOperator{\medium}{medium}
\DeclareMathOperator{\homog}{homog}
\DeclareMathOperator{\sing}{sing}
\DeclareMathOperator{\Proj}{Proj }
\DeclareMathOperator{\Z}{\mathbb Z}
\DeclareMathOperator{\F}{\mathbb F}
\DeclareMathOperator{\A}{\mathbb A}
\DeclareMathOperator{\mO}{\mathcal O}
\DeclareMathOperator{\mP}{\mathbb P}
\DeclareMathOperator{\mfm}{\mathfrak m}
\DeclareMathOperator{\Prob}{\mbox{Prob}}
\newtheorem{thm}{Theorem}[section]
\newaliascnt{defin}{thm}%Definition
\newaliascnt{lemma}{thm}%Lemma
\newtheorem{lemma}[lemma]{Lemma}
\newaliascnt{cor}{thm}%Korollar
\newaliascnt{nota}{thm}%Notation
\newaliascnt{conject}{thm}%Conjecture
\newaliascnt{prop}{thm}%Proposition
\newaliascnt{fact}{thm}%Fact
\newaliascnt{result}{thm}%Result
\newaliascnt{lemmadefin}{thm}%Lemma/Definition
\theoremstyle{remark}
\newaliascnt{rem}{thm}%Remark
\newtheorem{rem}[rem]{Remark}
\theoremstyle{remark}
\newaliascnt{notation}{thm}%Remark
\theoremstyle{remark}
\newaliascnt{example}{thm}%Example
\crefname{equation}{Equation}{Equations}
\title{The singular locus of hypersurface sections containing a closed subscheme over finite fields}
\author{Franziska Wutz}
\begin{document}
\pagestyle{plain}

\begin{abstract}
We prove that there exist hypersurfaces that contain a given closed subscheme $Z$ of the projective space over a finite field and intersect a given smooth scheme $X$ off of $Z$ smoothly, if the intersection $V=Z\cap X$ is smooth. Furthermore, we can give a bound on the dimension of the singular locus of the hypersurface section and prescribe finitely many local conditions on the hypersurface. This is an analogue of a Bertini theorem of Bloch over finite fields and is proved using Poonen's closed point sieve. We also show a similar theorem for the case where $V$ is not smooth.
\end{abstract}
\maketitle

\section{Introduction}

The classical Bertini theorem over infinite fields guarantee the existence of a smooth hypersurface section for a smooth subscheme $X$ of the projective space. For a given closed subvariety $Z\subseteq X$, Bloch showed that the hypersurface can be assumed to contain $Z$, if $V=Z\cap X$ is smooth and $2 p> \dim X$, where $1<p$ is the codimension of $Z$ in $X$ (\cite{bloch}). If the condition on the dimension is not fulfilled, this does not hold anymore. But Bloch showed that there still exists a hypersurface section $Y$ of $X$ such that the singular locus of $Y$ is smooth, contained in $Z$ and of dimension $n-2p$. 
Over finite fields, Poonen proved an analogue for the case where $V$ is smooth and $2\dim V<\dim X$ (\cite{poonensubscheme}); here $Z$ is a closed subscheme of the projective space. In this paper, we generalize this and show an analogue over finite fields of Bloch's result for $2\dim V\geq \dim X$, where we also add the possibility to impose finitely many local conditions on the hypersurface (\cref{bertinimove}).

Furthermore, we show that if the intersection $V$ is not smooth and $2\dim V=\dim X$, there exists a hypersurface $H$ containing $Z$ and intersecting $X$ off of $Z$ smoothly, such that the singular locus of $H\cap X$ consists at most of finitely many points. We use Poonen's closed point sieve (cf. \cite{poonen}) to prove this, and also prescribe finitely many local conditions on the hypersurface (\cref{bertinivnotsmooth}).\vspace{\baselineskip}
 
We use the following notation: let $\F_q$ be a finite field of $q=p^a$ elements. Let $S=\mathbb F_q\left[x_0,\ldots,x_n\right]$ and let $S_d\subseteq S$ the $\mathbb F_q$-subspace of homogeneous polynomials of degree $d$. Let $S_{\homog}=\bigcup_{d\geq 0}S_d$ and let $S_d'$ be the set of all polynomials in $\F_q[x_0, \ldots x_n]$ of degree $\leq d$.

For a scheme $X$ of finite type over $\F_q$, we define the zeta function of $X$ as
$$\zeta_X(s):=\prod\limits_{P\in X \text{ closed}} (1-q^{-s\deg P})^{-1}.$$
This product converges for $\mbox{Re}(s)>\dim X$. 

Let $Z$ be a fixed closed subscheme of $\mP^n=\mP^n_{\F_q}$. For $d\in \Z_{\geq 0}$ let $I_d$ be the $\F_q$-subspace of polynomials $f\in S_d$ vanishing on $Z$, and $I_{\homog}=\bigcup_{d\geq 0}I_d$. For a polynomial $f\in I_d$ let $H_f=\Proj(S/(f))$ be the hypersurface defined by $f$.  

As in Poonen's paper (\cite{poonensubscheme}), we want to measure the density of a set of polynomials within the space of polynomials vanishing on $Z$, and define the density relative to the closed subscheme $Z$ of a subset $\mathcal P\subseteq I_{\homog}$ to be
$$\mu_Z(\mathcal P):=\lim\limits_{d\rightarrow \infty}\frac{\#(\mathcal P\cap I_d)}{\#I_d},$$
if the limit exists. \vspace{\baselineskip}

{\bf Acknowledgments.} The author would like to thank Uwe Jannsen heartily for his support and help. Furthermore, the author gratefully acknowledges support from the Deutsche Forschungsgemeinschaft through the SFB 1085 \textit{Higher Invariants}.

\section{Results for V smooth}
\label{sec:vsmooth}
Following \cite{poonen}, we want to prescribe finitely many local conditions on the hypersurface. For this, let $Y$ be a finite subscheme of $\mP^n$. For a polynomial $f\in I_d$ we define $f\big|_Y\in H^0(Y,\mathcal I_Z \cdot \mO_Y)$ as follows: on each connected component $Y_i$ of $Y$ let $f\big|_Y$ be equal to the restriction of $x_j^{-d}f$ to $Y_i$, where $j=j(i)$ is the smallest $j\in \left\{0,1,\ldots,n\right\}$ such that the coordinate $x_j$ is invertible on $Y_i$.

\begin{thm}
\label{bertinimove}
Let $X$ be a quasi-projective subscheme of $\mP^n$ and $Y$ a finite subscheme of $\mP^n$ such that $U=X-(X\cap Y)$ is smooth of dimension $m\geq 0$ over $\F_q$. Let $Z$ be a closed subscheme of $\mP^n$ such that $Z\cap Y=\emptyset$. Let $T\subseteq H^0(Y,\mathcal I_Z\cdot \mO_Y)$. Assume that the intersection $V:=Z\cap U$ is smooth of dimension $l$ such that $2l\leq m+k$ where $k\in \Z_{\geq 0}$. Define
\begin{align*}
\mathcal P =\{f\in I_{\homog}: \;&H_f\cap U \mbox{ is smooth of dimension $m-1$ at all points } P\in U-V,\\
&f\big|_Y\in T \mbox{ and } \dim(H_f\cap U)_{\sing}\leq k\}.
\end{align*}
Then $\mu_Z(\mathcal P)=\dfrac{\# T}{\# H^0(Y,\mathcal I_Z\cdot\mO_Y)}\zeta_{U-V}(m+1)^{-1}$.
\end{thm}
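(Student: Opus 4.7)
The plan is to combine Poonen's closed point sieve, in the form suited to the relative density $\mu_Z$, with a Bloch-type dimension count on $V$ to control the singularities that necessarily lie on $V$. I decompose the conditions defining $\mathcal{P}$ into three pieces: (A) $f|_Y\in T$; (B) $H_f\cap U$ is smooth of dimension $m-1$ at every closed point of $U-V$; and (C) $\dim(H_f\cap U)_{\sing}\leq k$. Once (B) is imposed, the singular locus of $H_f\cap U$ is contained in $V$, so (C) reduces to a dimension bound on a subscheme of $V$.

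Conditions (A) and (B) are handled by a direct adaptation of Poonen's sieve. Since $Z\cap Y=\emptyset$, the ideals $\mathcal{I}_Z$ and $\mathcal{I}_Y$ are coprime, so for $d\gg 0$ the evaluation $I_d\to H^0(Y,\mathcal{I}_Z\cdot\mathcal{O}_Y)$, $f\mapsto f|_Y$, is surjective with fibers of constant size; this produces the factor $\#T/\#H^0(Y,\mathcal{I}_Z\cdot\mathcal{O}_Y)$ and decouples (A) from the geometric conditions on $U-V$. For (B), since $(U-V)\cap Z=\emptyset$, at any closed point $P\in U-V$ the smoothness of $H_f\cap U$ at $P$ is the usual Jacobian condition, unobstructed by $f\in I_d$. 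Running Poonen's sieve on the smooth quasi-projective scheme $U-V$, partitioning its closed points by degree into low, medium, and high classes in the usual way, then yields the Euler factor $\zeta_{U-V}(m+1)^{-1}=\prod_{P\in U-V}\bigl(1-q^{-(m+1)\deg P}\bigr)$.

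The heart of the proof is condition (C). At a closed point $P\in V$, the singularity of $H_f\cap U$ at $P$ is detected by the image of $f$ in $\mathfrak{m}_{P,U}/\mathfrak{m}_{P,U}^2$; since $f\in\mathcal{I}_{V,P}\subseteq\mathfrak{m}_{P,U}$, this image lies in the fiber $\mathcal{N}^{*}_{V/U,P}$ of the conormal bundle of $V$ in $U$, a $\kappa(P)$-vector space of dimension $m-l$. Hence the singular locus $W_f:=(H_f\cap U)_{\sing}$ is the zero scheme of a global section of the rank $m-l$ bundle $\mathcal{N}^{*}_{V/U}$ on the smooth $l$-dimensional scheme $V$, with expected dimension $l-(m-l)=2l-m$, which is $\leq k$ by hypothesis.

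To upgrade this expected bound to a density-one statement, I would run a second closed-point sieve on $V$, bounding $\Prob(\dim W_f>k)\leq\sum_W\Prob(W\subseteq W_f)$ over integral closed subschemes $W\subseteq V$ of dimension $k+1$, stratified by degree. A jet-surjectivity lemma on $V$, adapted so that the evaluation map from $I_d$ (rather than all of $S_d$) to the relevant conormal jets along $W$ is surjective for $d\gg 0$, shows each individual probability decays roughly like $q^{-(m-l)\deg W}$; combined with crude Lang-Weil-type bounds on the number of such $W$ at each degree, the hypothesis $2l\leq m+k$ is exactly what makes the total sum tend to zero as $d\to\infty$. The hardest step, requiring the most delicate adaptation of Poonen's techniques, will be establishing this jet-surjectivity inside $I_d$ uniformly for $W$ ranging over the relevant subschemes of $V$; once this is in hand, combining (A), (B), and (C) gives the claimed density.
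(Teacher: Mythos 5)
Your decomposition into (A), (B), (C) and your treatment of (A) and (B) match the paper's sieve: low, medium, and high degree points of $U-V$ are handled exactly as you describe (Lemmas \ref{low}, \ref{medium}, \ref{highoffv}), and the factor $\#T/\#H^0(Y,\mathcal I_Z\cdot\mO_Y)$ comes out of the low-degree count since $Z\cap Y=\emptyset$. The gap is in (C), which is the heart of the theorem. Your conormal-bundle heuristic (the singular points on $V$ are the zero locus of a section of the rank $m-l$ bundle $\mathcal N^*_{V/U}$, expected dimension $2l-m\leq k$) correctly explains why the bound should hold, but the proposed way to make it rigorous --- a union bound $\Prob(\dim W_f>k)\leq\sum_W\Prob(W\subseteq W_f)$ over integral $(k+1)$-dimensional subschemes $W\subseteq V$, with per-$W$ decay of order $q^{-(m-l)\deg W}$ granted by a jet-surjectivity lemma --- does not go through. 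If $\dim W_f>k$, the offending component $W$ can have degree comparable to (a power of) $d$, and in that range the evaluation map from $I_d$ to jets along $W$ cannot be surjective for dimension reasons ($\dim I_d$ grows like $d^n$ while the number of jet conditions along a high-degree $W$ outstrips it); this failure of surjectivity for points and subschemes of large degree is precisely the difficulty that forces Poonen's separate medium/high-degree arguments. Moreover, even granting the claimed decay, the number of integral $(k+1)$-dimensional subvarieties of $V$ of degree $e$ grows superexponentially in $e$ (Chow-variety dimensions grow polynomially of degree at least $k+2$ in $e$), so a decay of order $q^{-(m-l)e}$ is far too weak for the sum to converge. The hypothesis $2l\leq m+k$ does not enter as a convergence condition for such a sum.

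What the paper does instead (Lemma \ref{highonv}) avoids enumerating candidate components entirely. Locally at a point of $V$ one chooses parameters $t_1,\ldots,t_n$ with $t_1,\ldots,t_{m-l}$ cutting out $V$ inside $U$ and, crucially, vanishing on $Z$; a uniformly random $f\in I_d'$ is then written as $f=f_0+g_1^pt_1+\cdots+g_{l-k}^pt_{l-k}$ with $f_0\in I_d'$ and $g_i$ random of degree about $d/p$. This decomposition stays inside $I_d'$ exactly because $l-k\leq m-l$, i.e. $2l\leq m+k$ --- that is how the dimension hypothesis is actually used. Since $D_j(g_i^pt_i)=g_i^pD_jt_i$ in characteristic $p$, the conditions $D_1f,\ldots,D_{l-k}f$ decouple, and Poonen's induction (Lemma 2.6 of the Bertini paper) shows that each successive locus $W_i=V\cap\{D_1f=\cdots=D_if=0\}$ drops in dimension by one with probability $1-o(1)$, so $\dim W_{l-k}\leq k$ with probability $1-o(1)$; since $W_{l-k}$ contains all singular points of $H_f\cap U$ lying on $V$, this bounds $\dim((H_f\cap U)_{\sing}\cap V)$ directly. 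If you want to salvage your plan, you would essentially have to reprove this decoupling-plus-induction argument, at which point you are reproducing the paper's proof rather than giving an alternative one.
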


In particular, there exists a hypersurface $H$ containing $Z$, defined by a polynomial $f$ such that $f\big|_Y\in T$ and such that the singular locus of the hypersurface section is contained in $Z$ and at most of dimension $k$. The existence of such a hypersurface has been shown for infinite fields as well (\cite{bloch}, Proposition 1.2).

\begin{rem}
\label{rem:notation}
The proof is a closed point sieve as introduced in \cite{poonen} and is organized as follows:
First we look at closed points of low degree, defining the relevant set of polynomials to be 
\begin{align*}
\mathcal P_r=\{f\in I_{\homog}:\; & H_f\cap U \mbox{ is smooth of dimension } m-1 \\
&\mbox{at all points } P\in (U-V)_{<r} \mbox{ and } f\big|_Y\in T\}
\end{align*}
and calculate the density of $\mathcal P_r$ in \cref{low}. As in \cite{poonen}, the argument does not work if we let $r$ tend to infinity before we measure the density. Hence we also need to consider closed points of medium and high degree. We fix $c$ such that $S_1I_d=I_{d+1}$ for all $d\geq c$ (cf. \cite{poonensubscheme}).
For closed points of medium degree, let
\begin{align*}
\mathcal Q_r^{\medium}=\bigcup\limits_{d\geq 0}\{f\in I_d:\;&\mbox{there exists a point } P\in U-V \mbox{ with } r\leq \deg P\leq \frac{d-c}{m+1} \\
&\hspace{-1mm}\mbox{such that } H_f\cap U \mbox{ is not smooth of dimension $m-1$ at } P\}.
\end{align*}
For closed points of high degree, we differentiate between points on and off of $V$, and define 
\begin{align*}
\mathcal Q^{\high}_{U-V}=\bigcup\limits_{d\geq 0}\{f\in I_d:\; &\mbox{there exists a point } P\in (U-V)_{>\frac{d-c}{m+1}} \mbox{ such that} \\
&H_f\cap U \mbox{ is not smooth of dimension $m-1$ at $P$}\},
\end{align*}
and
$$\mathcal Q_V=\{f\in I_{\homog}:\;\dim((H_f\cap U)_{\sing}\cap V)\geq k+1\}.$$
We will show in \ref{medium}, \ref{highoffv} and \ref{highonv}, that the error in the approximation that we make by considering $\mathcal P_r$ instead of $\mathcal P$ tends to zero for $r\rightarrow \infty$, i.e. the sets $Q_r^{\medium}$, $Q^{\high}_{U-V}$ and $Q_V$ are of density zero for $r\rightarrow \infty$.
\end{rem}

\subsection{Points of low degree}

\begin{lemma}
\label{lowdegree}
Suppose $\mfm\subseteq \mO_U$ is the ideal sheaf of a closed point $P\in U$. Let $C\subseteq U$ be the closed subscheme whose ideal sheaf is $\mfm^2\subseteq \mO_U$. Then for any $d\in \Z_{\geq 0}$ we have 
$$H^0(X, \mathcal I_Z\cdot\mO_C(d))= \left\{\begin{array}{cl} q^{(m-l)\deg P}, &\mbox{ if } P\in V, \\ q^{(m+1)\deg P}, &\mbox{ else}.\end{array} \right.$$
\end{lemma}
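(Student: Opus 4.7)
The plan is to localize the computation at $P$ and then split into two cases according to whether $P$ lies on $V$. First I would observe that since $C$ is a zero-dimensional artinian scheme supported at $P$, the line bundle $\mO_C(d)$ on $C$ is (non-canonically) trivial, so $H^0(X,\mathcal I_Z\cdot\mO_C(d))$ is isomorphic as an $\F_q$-vector space to the stalk $(\mathcal I_Z\cdot\mO_C)_P$, which is the image of $\mathcal I_{Z,P}$ in $\mO_{C,P}=\mO_{U,P}/\mfm^2$. The cardinality of that image is exactly what the lemma asks me to compute.

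In the case $P\notin V$, the point $P$ does not lie on $Z$, so $\mathcal I_Z\cdot \mO_{U,P}$ is the unit ideal and $\mathcal I_Z\cdot\mO_C=\mO_C$. I would then compute $|\mO_{C,P}|$ using the smoothness of $U$ of dimension $m$ at $P$: the maximal ideal $\mfm$ is generated by a regular system $t_1,\ldots,t_m$, so the short exact sequence $0\to\mfm/\mfm^2\to\mO_{U,P}/\mfm^2\to\kappa(P)\to 0$ shows that $\mO_{C,P}$ is a $\kappa(P)$-vector space of dimension $m+1$. Its cardinality is thus $|\kappa(P)|^{m+1}=q^{(m+1)\deg P}$.

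In the case $P\in V$, I would use that $V=Z\cap U$ is smooth of dimension $l$ inside the smooth $m$-dimensional $U$, so its ideal in $\mO_{U,P}$ is generated by $m-l$ elements $f_1,\ldots,f_{m-l}$ whose classes in $\mfm/\mfm^2$ are linearly independent. Since $V=Z\cap U$ as schemes, $\mathcal I_Z\cdot\mO_{U,P}=\mathcal I_V\cdot\mO_{U,P}=(f_1,\ldots,f_{m-l})$. Its image in $\mO_{C,P}=\mO_{U,P}/\mfm^2$ is therefore the $\kappa(P)$-subspace of $\mfm/\mfm^2$ spanned by the classes of the $f_i$; by linear independence this subspace has dimension $m-l$ and cardinality $|\kappa(P)|^{m-l}=q^{(m-l)\deg P}$.

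The only subtlety is the identification of the global sections with the stalk at $P$ together with the trivialization of $\mO_C(d)$ on the affine zero-dimensional scheme $C$; once this is in place, the rest is the standard description of a smooth closed subscheme of a smooth scheme via regular parameters, so no genuine obstacle remains.
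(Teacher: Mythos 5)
Your argument is correct: it is essentially the standard proof of this counting lemma, namely localizing at $P$ (the sheaf is a skyscraper on the finite scheme $C$, and $\mO_C(d)$ trivializes), and then counting the image of $\mathcal I_{Z,P}$ in $\mO_{U,P}/\mfm^2$ via regular parameters, using that $\mathcal I_Z\cdot\mO_U=\mathcal I_V$ for the scheme-theoretic intersection and that smoothness of $V\subseteq U$ makes $\mathcal I_{V,P}$ generated by $m-l$ elements independent in $\mfm/\mfm^2$. The paper itself gives no computation but simply cites Lemma 2.2 of Poonen's paper (noting the dimension hypothesis there is unnecessary), and your proof is in substance the proof of that cited lemma, so there is no genuine divergence; note only that the displayed equality should be read as a statement about the cardinality $\#H^0(X,\mathcal I_Z\cdot\mO_C(d))$, as your count shows.
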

\begin{proof}
This is Lemma 2.2 of \cite{poonensubscheme}; the condition on the dimension is not needed here.
\end{proof}

\begin{lemma}[Singularities of low degree]
\label{low}
For $\mathcal P_r$ defined as in \cref{rem:notation}, 
$$\mu_Z(\mathcal P_r)=\dfrac{\# T}{\# H^0(Y,\mathcal I_Z\cdot\mO_Y)}\prod\limits_{P\in (U-V)_{<r}}(1-q^{-(m+1)\deg P})^{-1}.$$
\end{lemma}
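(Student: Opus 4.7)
The plan is to interpret the conditions defining $\mathcal P_r$ as independent local conditions on the restriction of $f$ to a fixed finite subscheme, and then apply Serre vanishing to isolate the density. For each closed point $P\in(U-V)_{<r}$, let $C_P\subseteq U$ be the closed subscheme with ideal sheaf $\mfm_P^2\subseteq\mO_U$, and set $W_r:=Y\sqcup\coprod_{P\in(U-V)_{<r}}C_P$. This is genuinely a disjoint union of finite subschemes of $\mP^n$: one has $Y\cap U=\emptyset$ since $U=X-(X\cap Y)$, the points $P$ are distinct, and $Z\cap C_P=\emptyset$ because $P\notin V=Z\cap U$.

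The condition defining $\mathcal P_r$ depends only on $f|_{W_r}$. Indeed, for $P\in U-V$ the hypersurface section $H_f\cap U$ is smooth of dimension $m-1$ at $P$ if and only if the image $f|_{C_P}\in\mO_{U,P}/\mfm_P^2$ is nonzero: either $f(P)\neq 0$, in which case $P\notin H_f$, or $f(P)=0$ and smoothness amounts to $df(P)\neq 0$ in $\mfm_P/\mfm_P^2$. Since $P\notin Z$, we have $\mathcal I_Z|_{C_P}=\mO_{C_P}$, and \cref{lowdegree} gives $\#H^0(C_P,\mathcal I_Z\cdot\mO_{C_P}(d))=q^{(m+1)\deg P}$, so the good subset at $P$ has cardinality $q^{(m+1)\deg P}-1$.

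The key step is to show that for all sufficiently large $d$ the restriction map
$$\phi_d:I_d\longrightarrow H^0(Y,\mathcal I_Z\cdot\mO_Y)\,\oplus\,\bigoplus_{P\in(U-V)_{<r}}H^0(C_P,\mO_{C_P}(d))$$
is surjective. Since $r$ is fixed, so is the finite subscheme $W_r\cup Z\subseteq\mP^n$; applying global sections to
$$0\to\mathcal I_{W_r\cup Z}(d)\to\mathcal I_Z(d)\to\mathcal I_Z\cdot\mO_{W_r}(d)\to 0$$
and using Serre vanishing ($H^1(\mP^n,\mathcal I_{W_r\cup Z}(d))=0$ for $d\gg 0$) yields the required surjectivity. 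I expect this step to be the main obstacle, essentially because one must fit the normalization defining $f|_Y$ (involving twists by $x_j^{-d}$) into a uniform cohomological framework and verify that the direct-sum decomposition of the target is compatible with the restriction from $I_d$.

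Given surjectivity, every fiber of $\phi_d$ has the same size, so the local conditions are genuinely independent via the direct-sum decomposition of the target, giving
$$\frac{\#(\mathcal P_r\cap I_d)}{\#I_d}=\frac{\#T}{\#H^0(Y,\mathcal I_Z\cdot\mO_Y)}\prod_{P\in(U-V)_{<r}}\bigl(1-q^{-(m+1)\deg P}\bigr)$$
for all $d\gg 0$. Since this expression is already independent of $d$, it equals $\mu_Z(\mathcal P_r)$; letting $r\to\infty$ produces exactly the factor $\zeta_{U-V}(m+1)^{-1}=\prod_{P\in U-V}(1-q^{-(m+1)\deg P})$ appearing in \cref{bertinimove}.
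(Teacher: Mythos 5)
Your argument is correct and is essentially the paper's own proof, which defers to Lemma 2.4 of \cite{wutz}: the conditions defining $\mathcal P_r$ depend only on the restriction of $f$ to $Y$ and to the fat points $C_P$ for $P\in(U-V)_{<r}$, the restriction map from $I_d$ is surjective for $d\gg 0$ (via Serre vanishing exactly as you set it up, the $x_j^{-d}$-normalization of $f\big|_Y$ being only a trivialization of $\mO_Y(d)$ and hence harmless), and \cref{lowdegree} supplies the count $q^{(m+1)\deg P}$ at each $P\notin V$, so the factors are independent and multiply as you state. Note that the product you obtain, $\prod_{P\in(U-V)_{<r}}(1-q^{-(m+1)\deg P})$ without the exponent $-1$, is the correct value: the inverse appearing in the displayed statement of the lemma is a typo, since a density cannot exceed $1$ and the limit as $r\to\infty$ must produce $\zeta_{U-V}(m+1)^{-1}$ as required in \cref{bertinimove}.
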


\begin{proof}
The proof is parallel to the one of Lemma 2.4 in \cite{wutz}, just use \cref{lowdegree} above and $(U-V)_{<r}=U_{<r}-V$ instead of $U_{<r}$.
\end{proof}

\begin{rem}
\label{explain:low}
Note that the condition on the dimension of $V$ is not needed here, so this proof would work also for points in $V$, and we would get the same density as in Lemma 2.3 of \cite{poonensubscheme}:
$$\mu_Z(\mathcal P_r)=\dfrac{\# T}{\# H^0(Y,\mathcal I_Z\cdot\mO_Y)}\prod\limits_{P\in (V)_{<r}}(1-q^{(m-l)\deg P})\prod\limits_{P\in (U-V)_{<r}}(1-q^{-(m+1)\deg P})^{-1}.$$
Nevertheless, we cannot ask for smoothness in points on $V$ for the polynomials in $\mathcal P_r$ or $\mathcal P$, since the assumption on the dimension in our case, i.e. $2l\leq m+k$, does not imply convergence of the zeta function.%: for $r\rightarrow \infty$ the additional factor for points in $V_{<r}$ gives the zeta function of $V$ evaluated at $m-l$. For convergence, we would need $m-l> \dim V=l$ as in \cite{poonensubscheme}, but we only have $2l \leq m+k$ with $k\in \mathbb Z_{\geq 0}$.
\end{rem}

\subsection{Points of medium degree}

\begin{lemma}[Singularities of medium degree]
\label{medium}
For $\mathcal Q_r^{\medium}$ defined as in \cref{rem:notation}, we have $\lim\limits_{r\rightarrow \infty}\mu_Z(\mathcal Q_r^{\medium})=0$.
\end{lemma}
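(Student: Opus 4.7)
The proof will adapt the medium-degree step of Poonen's closed point sieve (cf.\ \cite{poonen}, \cite{poonensubscheme}). The plan is: for each closed point $P \in U-V$ of admissible degree $e$, estimate the fraction of $f \in I_d$ for which $H_f \cap U$ fails to be smooth of dimension $m-1$ at $P$; then sum these estimates over all such $P$ and let $d \to \infty$.

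For the per-point bound, fix $P$ of degree $e$ with $r \leq e \leq (d-c)/(m+1)$, let $\mfm_P \subseteq \mO_U$ be its ideal sheaf, and let $C \subseteq U$ be the closed subscheme with ideal $\mfm_P^2$. Since $U$ is smooth of dimension $m$ at $P$, the section $H_f \cap U$ is not smooth of dimension $m-1$ at $P$ precisely when $f \in \mfm_P^2$, i.e.\ when $f$ restricts to zero in $H^0(C, \mathcal I_Z \cdot \mO_C(d))$. Because $P \notin V$, \cref{lowdegree} gives $\# H^0(C, \mathcal I_Z \cdot \mO_C(d)) = q^{(m+1)e}$. A standard surjectivity argument, using $S_1 I_d = I_{d+1}$ for $d \geq c$ as in \cite{poonensubscheme}, shows that the restriction map $I_d \to H^0(C, \mathcal I_Z \cdot \mO_C(d))$ is surjective whenever $(m+1)e + c \leq d$ -- which is exactly the reason the upper bound $(d-c)/(m+1)$ appears in the definition of $\mathcal Q_r^{\medium}$. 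Hence the fraction of bad $f$ at $P$ is exactly $q^{-(m+1)e}$.

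Summing over $P$ with the elementary estimate $\#U_e \leq C q^{me}/e$ for some constant $C = C(U)$ (which follows from $\#U(\F_{q^e}) = O(q^{me})$ and the fact that closed points of degree $e$ come from Galois orbits of size $e$), one gets
\begin{equation*}
\frac{\#(\mathcal Q_r^{\medium} \cap I_d)}{\# I_d} \;\leq\; \sum_{e=r}^{\lfloor (d-c)/(m+1) \rfloor} \#(U-V)_e \cdot q^{-(m+1)e} \;\leq\; \sum_{e \geq r} \frac{C}{e}\, q^{-e}.
\end{equation*}
The right-hand side is independent of $d$, so letting $d \to \infty$ yields $\mu_Z(\mathcal Q_r^{\medium}) \leq \sum_{e \geq r} (C/e)\, q^{-e}$, which tends to $0$ as $r \to \infty$.

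The only non-routine ingredient is the surjectivity of the evaluation map $I_d \to H^0(C, \mathcal I_Z \cdot \mO_C(d))$ in the stated degree range; once this is granted everything else reduces to a convergent geometric series dominated by $q^{-e}$. Since the dimension count at a single point is already encoded in \cref{lowdegree} and the surjectivity step is essentially identical to the one in \cite{poonensubscheme}, the whole medium-degree estimate transfers to our setting with no new difficulties.
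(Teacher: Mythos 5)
Your proposal is correct and is essentially the paper's argument: the paper proves this lemma simply by invoking the proof of Lemma 3.2 of \cite{poonensubscheme} (ignoring points of $V$), and what you write out --- the per-point fraction $q^{-(m+1)\deg P}$ via surjectivity of $I_d\to H^0(C,\mathcal I_Z\cdot\mO_C(d))$ in the range $\deg P\leq (d-c)/(m+1)$, combined with the point count $O(q^{me}/e)$ and the resulting tail $\sum_{e\geq r}O(q^{-e})$ --- is exactly that proof, with the cardinality at points off $V$ supplied by \cref{lowdegree}. No substantive difference from the paper's route.
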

\begin{proof}
This is parallel to the proof of Lemma 3.2. of \cite{poonensubscheme}, just ignore points in $V$. Again, the condition on the dimension is not used to prove this, but it does not work for points in $V$ as the resulting corresponding series does not converge for $2l>m$.
\end{proof}

\subsection{Points of high degree}

\begin{lemma}[Singularities of high degree off V]
\label{highoffv}
For $\mathcal Q^{\high}_{U-V}$ defined as in \cref{rem:notation}, 
$$\overline{\mu}_Z(\mathcal Q^{\high}_{U-V})=0.$$
\end{lemma}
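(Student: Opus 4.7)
The plan is to follow the closed-point-sieve union bound as in Poonen's corresponding high-degree lemma in \cite{poonensubscheme}; the key simplification is that any $P \in U-V$ lies outside $Z$, so $\mathcal I_Z$ is locally the unit ideal at $P$ and the analysis reduces to the non-relative case. First I would translate the non-smoothness condition at a closed point $P \in U-V$ into a linear one: letting $C_P \subseteq U$ be the first infinitesimal neighborhood cut out by $\mfm_P^2$, the section $H_f \cap U$ fails to be smooth of dimension $m-1$ at $P$ precisely when $f \in \mfm_P^2$, equivalently when the image of $f$ under the restriction map
\[
\phi_{d,P} \colon I_d \longrightarrow H^0\!\big(C_P,\, \mathcal I_Z \cdot \mO_{C_P}(d)\big)
\]
vanishes. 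By \cref{lowdegree}, the target has cardinality $q^{(m+1)\deg P}$.

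The crucial substep is to verify that for all $P$ in the high-degree range, the map $\phi_{d,P}$ is surjective: since $P \notin Z$, one can pick $s \in I_{e_0}$ with $s(P) \neq 0$, so that $s|_{C_P}$ is a unit in $\mO_{C_P}$; multiplication by $s$ then identifies the image of $\phi_{d,P}$ with the image of the standard restriction $S_{d-e_0} \to \mO_{C_P}(d-e_0)$, which is surjective for $d-e_0$ sufficiently large relative to $\deg P$. The choice of $c$ (with $S_1 I_d = I_{d+1}$ for $d \geq c$) ensures this surjectivity propagates throughout the regime $\deg P > (d-c)/(m+1)$. Consequently the fraction of $f \in I_d$ which are singular at a given high-degree $P$ is exactly $q^{-(m+1)\deg P}$, and a union bound gives
\[
\frac{\#(\mathcal Q^{\high}_{U-V} \cap I_d)}{\#I_d} \;\leq\; \sum_{e > (d-c)/(m+1)} \#(U-V)_e \cdot q^{-(m+1)e}.
\]
The standard Lang--Weil-type estimate $\#(U-V)_e \leq \#U(\F_{q^e})/e = O(q^{me}/e)$ then bounds the right-hand side by $O\!\big(\sum_{e > (d-c)/(m+1)} q^{-e}/e\big) \to 0$ as $d \to \infty$, yielding $\overline{\mu}_Z(\mathcal Q^{\high}_{U-V}) = 0$.

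The main obstacle is the surjectivity of $\phi_{d,P}$ with uniform control in $\deg P$; if this direct route proves awkward, I would fall back on Poonen's explicit decomposition $f = f_0 + \sum g_i h_i$ (with the auxiliary $g_i$'s chosen inside $I$, exploiting elements of $I$ nonvanishing at $P$), which realizes arbitrary prescribed jets at $P$ by varying the $h_i$'s and supplies the same per-point bound $q^{-(m+1)\deg P}$.
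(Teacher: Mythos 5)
Your argument breaks down at its central step: the surjectivity of $\phi_{d,P}\colon I_d \to H^0(C_P,\mathcal I_Z\cdot\mO_{C_P}(d))$ fails precisely in the regime this lemma is about. The target has $\F_q$-dimension $(m+1)\deg P$, and surjectivity of the jet-evaluation map (as in Poonen's Lemma 2.1 and its $I_d$-variant with the constant $c$) is available exactly when $(m+1)\deg P \leq d-c$, i.e.\ for points of \emph{medium} degree; that is how \cref{medium} is proved. For $P$ with $\deg P > (d-c)/(m+1)$ the inequality goes the wrong way, and for $\deg P$ large compared with $d$ surjectivity is impossible for dimension reasons ($\dim_{\F_q} I_d$ is bounded while the target grows with $\deg P$). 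Consequently the per-point probability of non-smoothness at a high-degree point is \emph{not} $q^{-(m+1)\deg P}$: for instance, the probability that $f\in I_d$ even vanishes at a point of degree $\gg d$ is at least $q^{-\dim_{\F_q} I_d}$, independent of $\deg P$. With only that trivial bound, your union bound over all $e > (d-c)/(m+1)$ diverges, since $\#(U-V)_e$ grows like $q^{me}/e$. So the proposed route cannot be repaired by tightening constants; it is the medium-degree argument applied outside its range of validity.

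The fallback you sketch is also mis-stated. Poonen's decomposition $f=f_0+g_1^pt_1+\cdots+g_m^pt_m+h^p$ does not realize arbitrary prescribed jets at a high-degree point and does not yield a per-point bound $q^{-(m+1)\deg P}$; its purpose is different. One uses it to decouple the derivatives, $D_if=D_if_0+g_i^ps$, and shows by induction that the loci $W_i=U\cap\{D_1f=\cdots=D_if=0\}$ satisfy $\dim W_i\leq m-i$ with probability $1-o(1)$; then $W_m$ is finite of cardinality $O(d^m)$ by a B\'ezout-type bound, and for each of these finitely many points the remaining randomness in $h$ gives $\Prob\big(f(P)=0\big)\leq q^{-\min(\lfloor d/p\rfloor+1,\deg P)}$, which beats the polynomial point count because $\deg P>(d-c)/(m+1)$. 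That induction-plus-finiteness structure is the essential content of the lemma and is absent from your proposal. (For what it is worth, the paper itself disposes of this lemma by citing Lemma 4.2 of \cite{poonensubscheme}, noting that the dimension hypothesis $m>2l$ is not used there; your first paragraph's observation that points of $U-V$ lie off $Z$, so that elements of $I$ invertible at $P$ reduce matters to the non-relative case, is indeed the correct reduction used in that reference, but it must feed into the high-degree argument just described, not into a surjectivity statement.)
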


\begin{proof}
This is Lemma 4.2. of \cite{poonensubscheme}. The assumption $m>2l$ is not used in the proof. 
\end{proof}

\begin{lemma}[Singularities of high degree on V]
\label{highonv}
For $\mathcal Q_V$ defined as in \cref{rem:notation}, $$\overline{\mu}_Z(\mathcal Q_V)=0.$$
\end{lemma}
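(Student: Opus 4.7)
The plan is to show $\overline{\mu}_Z(\mathcal Q_V) = 0$ via an averaging argument combining the local density from \cref{lowdegree} with Lang-Weil point counting on $V$. Since $V \subseteq Z \subseteq H_f$ for every $f \in I_\homog$, the section $H_f \cap U$ fails to be smooth at a point $P \in V$ iff $df(P)$ lies in the conormal $(\mathcal I_V/\mathcal I_V^2)_P \subseteq \mathfrak m_P/\mathfrak m_P^2$, equivalently iff the $m-l$ ``normal derivatives'' of $f$ at $P$ vanish. On a finite affine cover of $V$ where $\mathcal I_V \subseteq \mathcal O_U$ is generated by a regular sequence $s_1,\ldots,s_{m-l}$, writing $f = \sum_j s_j g_j$ locally translates this into $g_1(P)=\cdots=g_{m-l}(P)=0$, so by \cref{lowdegree}, for $d$ sufficiently large and any $P \in V(\mathbb F_{q^e})$, the set $\{f \in I_d : P \in (H_f \cap U)_\sing\}$ has density exactly $q^{-(m-l)e}$.

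Set $N_e(f) := \#\bigl((H_f \cap U)_\sing \cap V\bigr)(\mathbb F_{q^e})$. Interchanging summation gives
\[ \frac{1}{\#I_d} \sum_{f \in I_d} N_e(f) \;=\; \frac{\#V(\mathbb F_{q^e})}{q^{(m-l)e}} \;\leq\; C_V \, q^{(2l-m)e}, \]
using the Lang-Weil bound $\#V(\mathbb F_{q^e}) \leq C_V q^{le}$. On the other hand, when $f \in \mathcal Q_V$, the subscheme $(H_f \cap U)_\sing \cap V$ contains an irreducible component of dimension $\geq k+1$, cut out inside $V$ by polynomials of degree bounded polynomially in $d$, so Lang-Weil yields $N_e(f) \geq c_d \, q^{(k+1)e}$ once $e$ is sufficiently large relative to this degree bound. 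Markov's inequality then gives
\[ \frac{\#(\mathcal Q_V \cap I_d)}{\#I_d} \;\leq\; \frac{C_V}{c_d} \, q^{(2l - m - k - 1)e}. \]
By the hypothesis $2l \leq m + k$, the exponent is $\leq -1$; choosing $e = e(d) \to \infty$ slowly as $d \to \infty$ yields $\overline{\mu}_Z(\mathcal Q_V) = 0$.

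The main obstacle is to secure the Lang-Weil lower bound on $N_e(f)$ uniformly in $f \in I_d$ as $d \to \infty$: one needs an explicit polynomial degree bound (in $d$) on the irreducible components of $(H_f \cap U)_\sing \cap V$, so that the rate $e(d) \to \infty$ can be calibrated to keep the Lang-Weil error term negligible while still forcing $q^{(2l - m - k - 1)e(d)} \to 0$. This is the essential refinement needed compared to the case $2l < m$ treated in \cite{poonensubscheme}, where the singular locus on $V$ is shown to be empty outright rather than merely of bounded dimension. A minor subsidiary point is the choice of affine cover of $V$ where $\mathcal I_V$ is generated by a regular sequence; this is possible because $V$ is smooth of codimension $m - l$ in $U$, so the conormal sheaf $\mathcal I_V/\mathcal I_V^2$ is locally free of rank $m-l$ and can be trivialized on a finite open cover.
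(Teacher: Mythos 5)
Your approach (first moment of $N_e(f)=\#\bigl((H_f\cap U)_{\sing}\cap V\bigr)(\F_{q^e})$ plus Markov's inequality plus a Lang--Weil lower bound for $f\in\mathcal Q_V$) is genuinely different from the paper's, which instead runs Poonen's characteristic-$p$ decoupling induction: write $f=f_0+g_1^pt_1+\cdots+g_{l-k}^pt_{l-k}$ with $t_1,\ldots,t_{m-l}$ local parameters vanishing on $Z$ (possible since $l-k\leq m-l$ by the hypothesis $2l\leq m+k$), and show step by step that each condition $D_if=0$ drops $\dim W_i$ by one with probability $1-o(1)$, so that $\dim W_{l-k}\leq k$. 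Your heuristic correctly identifies the expected dimension $2l-m\leq k$, but the argument as written has a genuine gap, and it is precisely the one the whole closed point sieve exists to circumvent. The identity $\Prob\bigl(P\in(H_f\cap U)_{\sing}\bigr)=q^{-(m-l)\deg P}$ comes from surjectivity of the restriction map $I_d\to H^0(\mathcal I_Z\cdot\mO_C(d))$, which holds only when $\deg P$ is small compared to $d$ (roughly $\deg P\leq (d-c)/(m+1)$; this is why the paper needs separate medium- and high-degree lemmas). So your first-moment bound $\mathbb E[N_e]\leq C_Vq^{(2l-m)e}$ is only justified for $e=O(d)$, and for larger $e$ the probability that a given high-degree point is singular has no a priori bound better than $1$.

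On the other side, the Lang--Weil lower bound $N_e(f)\geq c_d\,q^{(k+1)e}$ requires a geometrically irreducible component of dimension $\geq k+1$ that is defined over $\F_{q^e}$. An $\F_q$-irreducible component of $(H_f\cap U)_{\sing}\cap V$ of dimension $k+1$ may split geometrically into $r$ conjugate components; for $e$ not divisible by $r$ its $\F_{q^e}$-points all lie in pairwise intersections of these, hence number only $O(D^2q^{ke})$. Since $r$ is bounded only by the degree $D$ of the component, which by B\'ezout can be of order $d^{\dim V+1}$, you may be forced to take $e\gg d$, which is incompatible with the range $e=O(d)$ where the first-moment computation is valid. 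You flag the uniformity of the Lang--Weil bound as ``the main obstacle,'' but this is not a technical refinement to be supplied: the two requirements on $e$ pull in opposite directions, and I see no way to calibrate $e(d)$ to satisfy both. This is why the paper abandons point counting on $V$ entirely and instead bounds $\dim\bigl((H_f\cap U)_{\sing}\cap V\bigr)$ directly via the inductive $W_i$ argument.
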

\begin{proof}
The proof of this claim uses the induction argument used by Poonen in the proof of Lemma 2.6 in \cite{poonen}, and up to the definition of the polynomials $g_i$, it is similar to the one of Lemma 4.3 of \cite{poonensubscheme}. 

We may assume $U$ is contained in $\A^n=\left\{x_0\neq 0\right\}\subseteq \mP^n$. Dehomogenize by setting $x_0=1$, and identify $S_d$ with the space of polynomials $S_d'\subseteq \F_q\left[x_1,\ldots,x_n\right]=A$ of degree $\leq d$ and $I_d$ with a subspace $I_d'\subseteq S_d'$.

Let $P$ be a closed point of $U$. Choose a system of local parameters $t_1,\ldots,t_n\in A$ at $P$ on $\A^n$ such that $t_{m+1}=\ldots=t_n=0$ defines $U$ locally at $P$, and $t_1=\ldots=t_{m-l}=t_{m+1}=\ldots=t_n=0$ defines $V$ locally at $P$.
We may assume in addition that $t_1,\ldots,t_{m-l}$ vanish on $Z$ (cf. \cite{poonensubscheme}, Lemma 4.3).
By definition, $dt_1,\ldots,dt_n$ are a $\mO_{\A^n,P}$-basis for the stalk $\Omega^1_{\A^n|\F_q, P}$. Let $\partial_1,\ldots,\partial_n$ be the dual basis of the stalk of the tangent sheaf $\mathcal T_{\A^n|\F_q}$ at $P$. 
Choose $s\in A$ satisfying $s(P)\neq 0$ to clear denominators such that $D_i=s\partial_i$ defines a global derivation $A\rightarrow A$ for all $i$. Then there exists a neighbourhood $N_P$ of $P$ in $\A^n$ such that $N_P\cap \left\{t_{m+1}=\ldots=t_n=0\right\}=N_P\cap U$, $\Omega^1_{N_P|\F_q}=\oplus_{i=1}^n\mO_{N_P}dt_i$ and $s\in \mO(N_P)^*$. We can cover $U$ with finitely many $N_P$, so we may assume that $U$ is contained in $N_P$ for some $P$. For $f\in I_d'\cong I_d$, the hypersurface section $H_f\cap U$ fails to be smooth of dimension $m-1$ at some point $P\in V$ if and only if $(D_1 f)(P)=\ldots=(D_m f)(P)=0$. 

Let $\tau=\max_i (\deg t_i)$, $\gamma=\left\lfloor (d-\tau)/p\right\rfloor$, and $\nu=\left\lfloor d/p\right\rfloor$. If we choose $f_0\in I'_d$, $g_1\in S_{\gamma}',\ldots,g_{l-k}\in S_{\gamma}'$ uniformly at random, then
$$f=f_0+g_1^pt_1+\ldots+g_{l-k}^pt_{l-k}$$
is a random element of $I_d'$, because of $f_0$ and since by assumption $l-k\leq m-l$. Note that we cannot define more polynomials $g_{l-k+1},\ldots, g_m$ as in \cite{poonensubscheme}, since our condition on the dimension would not yield a polynomial in $I_d'$.

For $i=0,\ldots,l-k$ we define
$$W_i=V\cap \left\{D_1 f=\ldots=D_i f=0 \right\}.$$
By definition of the $D_i$, this subscheme depends only on $f_0, g_1,\ldots,g_i$.

As in Lemma 2.6 of \cite{poonen}, one can show that for $0\leq i\leq l-k-1$, conditioned on a choice of $f_0,g_1,\ldots,g_i$ for which $\dim (W_i)\leq l-i$, the probability that $\dim (W_{i+1})\leq l-i-1$ is $1-o(1)$ as $d\rightarrow \infty$.

It follows that for $i=0,\ldots,l-k$ we have $\Prob(\dim W_i\leq l-i)=1-o(1)$ as $d\rightarrow \infty$ and thus $W_{l-k}$ is at most of dimension $k$ with probability $1-o(1)$, which is what we claimed, since $W_{l-k}$ contains the points where $H_f\cap U$ is not smooth. 
\end{proof}

\begin{proof}[Proof of \cref{bertinimove}]
We have the inclusions 
$$\mathcal P\subseteq \mathcal P_r\subseteq \mathcal P\cup \mathcal Q_r^{\medium}\cup\mathcal Q_{U-V}^{\high}\cap \mathcal Q_V:$$
The first inclusion is clear. For the second, let $f\in \mathcal P_r$. If $f$ is not in $\mathcal P$, then by definition, either $H_f\cap U$ is not smooth at some point $P\in U-V$, or $\dim(H_f\cap U)_{\sing}\geq k+1$. In the first case, this point $P$ must be of some degree $\geq r$, since $f\in \mathcal P_r$, and $f\in\mathcal Q_r^{\medium}\cup\mathcal Q_{U-V}^{\high}$. For the second case, if $H_f\cap U$ is smooth at all points in $U-V$, then the singular locus of the hypersurface section is completely contained in $V$ and $f\in \mathcal Q_V$. If $H_f\cap U$ is not smooth at some point in $U-V$, then we are in the first case again.

By \cref{low}, \ref{medium}, \ref{highoffv} and \ref{highonv},
$$\mu_Z(\mathcal P)=\lim\limits_{r\rightarrow \infty}\mu_Z(\mathcal P_r)=\dfrac{\# T}{\# H^0(Y,\mathcal I_Z\cdot\mO_Y)}\zeta_{U-V}(m+1)^{-1}.$$
\end{proof}

\section{Results for V not smooth}

In the condition on the dimension we will need for the analogue of \cref{bertinimove} in the case where $V$ is not smooth, the embedding dimension of a scheme $X$ at a point $P$ will occur naturally. It is defined as $e(P)=\dim_{\kappa(P)}(\Omega^1_{X|\F_q}(P))$. Let
$$X_e=X(\Omega^1_{X|\F_q}, e)$$
be the subscheme such that a scheme morphism $f:T\rightarrow X$ factors through $X_e$ if and only if $f^*\Omega^1_{X|\F_q}$ is locally free of rank $e$. Then $X_e$ is the locally closed subscheme of $X$ where the embedding dimension of $X$ is $e$. 

The condition on the dimension for $V$ non smooth in the Bertini smoothness theorem (cf. \cite{wutz}, \cite{gunther}) for hypersurface sections containing a closed subscheme is $\max\{e + \dim V_e\}<m$ instead of $2\dim < \dim X$ for the case $V$ smooth (\cite{poonensubscheme}). Hence, one would expect an analogue of \cref{bertinimove} in the case $V$ non smooth to hold for $\max\{e + \dim V_e\}\leq m+k$. But using our methods, we cannot prove this exact analogue of \cref{bertinimove} where $V$ is not smooth, since we can only bound the dimension of the bad points of high degree in each $V_e$ and not in $V$. The only case that still works is $k=0$: 

\begin{thm}
\label{bertinivnotsmooth}
Let $X$ be a quasi-projective subscheme of $\mP^n$ and $Y$ a finite subscheme of $\mP^n$ such that $U=X-(X\cap Y)$ is smooth of dimension $m\geq 0$ over $\F_q$. Let $Z$ be a closed subscheme of $\mP^n$ such that $Z\cap Y=\emptyset$. Let $T\subseteq H^0(Y,\mathcal I_Z\cdot \mO_Y)$. 
Assume that for $V=Z\cap U$ we have $\max\{e + \dim V_e\}\leq m$. Define
\begin{align*}
\mathcal P=\{f\in I_{\homog} :\; &f\big|_Y\in T, \; (H_f\cap U)_{\sing}\subseteq Z\\ 
&\text{ and such that } \dim(H_f\cap U)_{\sing} \leq 0\}.
\end{align*}
Then $\mu_Z(\mathcal P)=\dfrac{\#T}{\#H^0(Y,\mO_Y)}\zeta_{U-V}(m+1)^{-1}$.
\end{thm}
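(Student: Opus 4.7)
The plan is to follow the closed point sieve of \cref{bertinimove} and reuse \cref{low}, \cref{medium} and \cref{highoffv} verbatim, since none of those used smoothness of $V$. The only new ingredient is to control positive-dimensional singularities on $V$ when $V$ itself is singular. Since the hypothesis $\max\{e+\dim V_e\}\leq m$ is cleanly stated stratum-by-stratum, I will stratify $V=\bigsqcup_e V_e$ by embedding dimension (a finite, locally closed decomposition, since $e$ is upper semicontinuous) and, for each $e$ that appears, prove
$$\overline{\mu}_Z(\mathcal Q_{V_e})=0, \qquad \mathcal Q_{V_e} = \{f\in I_{\homog} : \dim((H_f\cap U)_{\sing}\cap V_e)\geq 1\}.$$
Once this is in hand, \cref{bertinivnotsmooth} follows from the inclusion
$$\mathcal P\subseteq \mathcal P_r \subseteq \mathcal P\cup \mathcal Q_r^{\medium}\cup \mathcal Q_{U-V}^{\high}\cup\bigcup_e \mathcal Q_{V_e}$$
and $r\to\infty$, exactly as at the end of the proof of \cref{bertinimove}: an $f\in \mathcal P_r\setminus \mathcal P$ is either non-smooth at a closed point of $U-V$ (captured by $\mathcal Q_r^{\medium}\cup \mathcal Q_{U-V}^{\high}$) or has a singular locus of dimension $\geq 1$ inside $V$, which must intersect some stratum $V_e$ in dimension $\geq 1$.

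For the new estimate I will adapt the argument of \cref{highonv}. Work affinely, $U\subseteq \A^n$. At a closed point $P\in V_e$ the embedding dimension of $V$ at $P$ equals $e$, so the ideal of $V$ in $\mO_{U,P}$ is generated by exactly $m-e$ elements, which can be extended to a regular system of local parameters $t_1,\ldots,t_m$ of $U$ at $P$; Lemma~4.3 of \cite{poonensubscheme} lets me assume moreover that $t_1,\ldots,t_{m-e}$ vanish on $Z$. The hypothesis $\dim V_e\leq m-e$ now gives just enough room: with $\tau=\max_i(\deg t_i)$ and $\gamma=\lfloor (d-\tau)/p\rfloor$, write
$$f = f_0 + g_1^p t_1 + \ldots + g_{\dim V_e}^p t_{\dim V_e}, \qquad f_0\in I_d',\; g_j\in S_\gamma'.$$
A uniform random choice of $(f_0,g_1,\ldots,g_{\dim V_e})$ produces a uniform random $f\in I_d'$. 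A direct computation gives $D_j f = D_j f_0 + s\, g_j^p$ for $j\leq \dim V_e$ (the mixed terms vanish in characteristic $p$), so the derivatives $D_1 f,\ldots,D_{\dim V_e} f$ inherit the genericity of the $g_j^p$'s on $V_e$. Setting $W_i = V_e\cap\{D_1 f=\ldots=D_i f=0\}$ and invoking the induction step of Lemma~2.6 of \cite{poonen}, I expect to show that, conditional on $\dim W_i\leq \dim V_e - i$, we have $\dim W_{i+1}\leq \dim V_e - i -1$ with probability $1-o(1)$. After $\dim V_e$ steps, $\dim W_{\dim V_e}\leq 0$ with probability tending to $1$, and $W_{\dim V_e}$ contains $(H_f\cap U)_{\sing}\cap V_e$, giving $\overline{\mu}_Z(\mathcal Q_{V_e})=0$.

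The hard part will be running the induction step on the locally closed, generally singular stratum $V_e$: the random perturbation $s\,g_{i+1}^p|_{V_e}$ must still reduce the dimension of the bad locus by one generically, even though $V_e$ is not smooth and may have several irreducible components, so one has to argue carefully that the $p$-th powers of polynomials of degree $\gamma$ cut out generic hypersurfaces on each component of $W_i$. A related bookkeeping obstacle is that the inequality $\dim V_e\leq m-e$ is used at its maximum: all $m-e$ of the $Z$-vanishing local parameters get consumed whenever $e+\dim V_e=m$, leaving no perturbation in reserve. This is precisely why the method only succeeds for $k=0$ and does not extend to arbitrary $k$, exactly as flagged in the remark preceding the statement.
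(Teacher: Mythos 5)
Your overall strategy coincides with the paper's: reuse \cref{low}, \cref{medium}, \cref{highoffv} unchanged, stratify $V$ by embedding dimension into the finitely many locally closed $V_e$, and on each stratum kill positive-dimensional singular loci by the perturbation $f=f_0+g_1^pt_1+\cdots+g_{l_e}^pt_{l_e}$ with $t_1,\ldots,t_{m-e}$ vanishing on $Z$, followed by the induction of Lemma 2.6 of \cite{poonen}; this is exactly the paper's \cref{highvnotsmooth}, and your sieve inclusion with $\bigcup_e\mathcal Q_{V_e}$ is just the paper's single set $\mathcal Q$ handled stratum by stratum. (Your count of $l_e=\dim V_e$ perturbing polynomials, justified by $l_e\leq m-e$, is the intended one; the paper's displayed formula has an index slip writing $g_{l_e-1}$, but it runs the same $l_e$-step induction down to $\dim W_{l_e}\leq 0$.)

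One justification in your write-up is false as stated, although the step it is meant to support is correct. You claim that, since $V$ has embedding dimension $e$ at $P$, its ideal in $\mO_{U,P}$ is generated by exactly $m-e$ elements. This fails precisely at singular points of $V$: for the cone $\{x_1x_2=x_3^2\}\subseteq\A^3\subseteq\A^5$ with $U=\A^5$, at the vertex one has $e=3$, so $m-e=2$, yet the ideal $(x_1x_2-x_3^2,\,x_4,\,x_5)$ needs three generators. What you actually need is weaker: there exist $m-e$ elements of the ideal of $V$ whose differentials at $P$ are linearly independent in $\Omega^1_{U|\F_q}(P)$, hence extend to a regular system of parameters. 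This is what the paper extracts from the conormal sequence $\mathcal I_V/\mathcal I_V^2\rightarrow \Omega^1_X\otimes\mO_V\rightarrow\Omega^1_V\rightarrow 0$: the kernel of the right-hand map has rank $m-e$ at $P\in V_e$, giving $t_1,\ldots,t_{m-e}$ vanishing on $V$, and Lemma 4.3 of \cite{poonensubscheme} then lets one take them in the ideal of $Z$, which is the property actually used to keep $g_i^pt_i\in I_d'$. With that repair your argument goes through. Finally, your worry about running the induction on a singular, possibly reducible $V_e$ is unfounded: Poonen's induction step uses only the dimension bound on $W_i$ (to count its closed points of bounded degree) together with the identity $D_{i+1}f=D_{i+1}f_0+s\,g_{i+1}^p$ at each such point; no smoothness or irreducibility of $V_e$ enters, which is why the paper can simply cite Lemma 2.6 of \cite{poonen} there.
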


The proof again uses the closed point sieve. We define $\mathcal P_r$, $\mathcal Q_r^{\medium}$ and $\mathcal Q_{U-V}^{\high}$ as in in the previous section. The proofs for \ref{low}, \ref{medium} and \ref{highoffv} do not need the conditions on the smoothness or dimension of $V$, and thus we only have to show the following lemma on singularities of high degree:

\begin{lemma}[Singularities of high degree for $V$ not smooth]
\label{highvnotsmooth}
Define
$$\mathcal Q=\left\{f\in I_{\homog}:\; \dim (H_f\cap U)_{\sing}\geq 1\right\}.$$
Then $\mu_Z(\mathcal Q)=0$.
\end{lemma}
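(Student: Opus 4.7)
The plan is to reduce the problem to singular components of $H_f \cap U$ lying inside $V$, then apply Poonen's induction argument on an embedding-dimension stratification of $V$. First, if $f \in \mathcal Q$, pick a positive-dimensional irreducible component $C$ of $(H_f \cap U)_{\sing}$: either $C \subseteq V$, or the generic point of $C$ lies in $U - V$, in which case $C$ contains closed points of arbitrarily large degree in $U - V$, so $f \in \mathcal Q_{U-V}^{\high}$. Thus
$$\mathcal Q \subseteq \mathcal Q_{U-V}^{\high} \cup \mathcal Q_V, \qquad \mathcal Q_V := \{f \in I_{\homog}: \dim((H_f \cap U)_{\sing} \cap V) \geq 1\},$$
and by \cref{highoffv} it remains to show $\mu_Z(\mathcal Q_V) = 0$. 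Stratifying $V = \bigsqcup_{e=0}^{m} V_e$ by embedding dimension and writing $l_e = \dim V_e$, the hypothesis gives $l_e \leq m - e$; since only finitely many strata occur, it suffices to fix $e$ with $l_e \geq 1$ and show that $\mathcal Q_e := \{f \in I_{\homog}: \dim((H_f \cap U)_{\sing} \cap V_e) \geq 1\}$ has density zero.

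For each $e$ I follow the template of \cref{highonv}. After dehomogenizing $x_0 = 1$, cover $V_e$ by finitely many affine neighborhoods. Near $P \in V_e$, the kernel of the surjection $\Omega^1_{U|\F_q}(P) \twoheadrightarrow \Omega^1_{V|\F_q}(P)$ has rank $m - e \geq l_e$; by Poonen's Lemma 4.3 in \cite{poonensubscheme} I lift a basis of this kernel to functions $t_1, \ldots, t_{l_e}$ that vanish on $Z$, vanish on $V$ in a neighborhood $N_P$ of $P$, and have $\kappa(P)$-linearly independent differentials at $P$. Extend to a system of local parameters $t_1, \ldots, t_n$ of $\A^n$ at $P$ with $t_{m+1} = \ldots = t_n = 0$ defining $U \cap N_P$, and set $D_i = s\partial_i$ for the appropriately rescaled dual basis. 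Decomposing a uniformly random $f \in I_d'$ as $f = f_0 + g_1^p t_1 + \ldots + g_{l_e}^p t_{l_e}$ with $f_0 \in I_d'$ and $g_i \in S_\gamma'$ uniform, the identities $\partial_i t_j = \delta_{ij}$ and $\partial_i g_j^p = 0$ yield $D_i f = D_i f_0 + s g_i^p$ for $1 \leq i \leq l_e$. Defining $W_i = V_e \cap N_P \cap \{D_1 f = \ldots = D_i f = 0\}$, Poonen's inductive step from \cite{poonen}, Lemma 2.6 shows that conditioned on $\dim W_i \leq l_e - i$, one has $\dim W_{i+1} \leq l_e - i - 1$ with probability $1 - o(1)$ as $d \to \infty$. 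Iterating $l_e$ times gives $\dim W_{l_e} \leq 0$ with probability $1 - o(1)$; since $W_{l_e}$ contains $(H_f \cap U)_{\sing} \cap V_e \cap N_P$, summing over the finitely many $N_P$ and over $e$ yields $\mu_Z(\mathcal Q_V) = 0$.

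The main obstacle is arranging $t_1, \ldots, t_{l_e}$ to vanish on all of $Z$ (not only on $V$), without which the decomposition $f_0 + \sum g_i^p t_i$ would fail to lie in $I_d'$; this is precisely what Poonen's Lemma 4.3 supplies. The dimension hypothesis $l_e + e \leq m$ is exactly what guarantees that $l_e$ such $t_i$'s exist, i.e.\ just enough independent derivatives to force $\dim W_{l_e}$ down to $0$; this is also why the argument fails for any $k \geq 1$ analogue where one would need $\dim V_e - k$ steps inside a fiber on which only $m - e$ parameters are available.
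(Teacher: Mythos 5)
Your proposal is correct and follows essentially the same route as the paper: stratify $V$ by embedding dimension, use the conormal sequence to produce local parameters $t_1,\ldots,t_{l_e}$ vanishing on $Z$ (possible precisely because $l_e\leq m-e$), randomize $f=f_0+\sum_{i} g_i^pt_i$, and run Poonen's induction to force $\dim W_{l_e}\leq 0$ on each stratum. The only differences are cosmetic: you make explicit the reduction of positive-dimensional singular components off $V$ to $\mathcal Q^{\high}_{U-V}$ (left implicit in the paper), and you use $l_e$ polynomials $g_1,\ldots,g_{l_e}$, which corrects what appears to be an index slip ($g_{l_e-1}^pt_{l_e}$) in the paper's displayed formula.
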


\begin{proof}
We may assume $U$ is contained in $\A^n=\left\{x_0\neq 0\right\}\subseteq \mP^n$.
As there are only finitely many $V_e$, it is enough to bound the probability that a polynomial gives a singular locus of dimension $\geq 1$ if we intersect this singular locus with $V_e$.

Let $P$ be a closed point of $V_e$. Since $U$ is smooth, we can choose a system of local parameters $t_1,\ldots, t_n\in A$ on $\mathbb A^n$ such that $t_{m+1}=\ldots=t_n=0$ defines $U$ locally at $P$. Then $dt_1,\ldots,dt_n$ are a basis for the stalk of $\Omega^1_{\A^n|\F_q}$ at $P$ and
$dt_1,\ldots,dt_m$ are a basis for the stalk of $\Omega^1_{U|\F_q}$ at $P$. Using the exact sequence (\cite{hartshorne}, Section II.8)
$\mathcal I_V/\mathcal I_V^2\rightarrow \Omega_X^1\otimes \mathcal O_V\stackrel{\phi}{\rightarrow} \Omega_V^1\rightarrow 0,$
we show that $dt_1,\ldots,dt_{m-e}$ form a basis of the kernel of $\phi$ at $P$ and $dt_{m-e+1},\ldots,dt_{m}$ a basis of $\Omega^1_{V|\F_q,P}\otimes \mO_{V_e,P}$. In particular, $t_1,\ldots, t_{m-e}$ all vanish on V, since $\Omega^1_{V|\F_q}\otimes \kappa(P)\cong \mfm_{V,P}/\mfm_{V,P}^2$. Again, we may also assume in addition that $t_1,\ldots,t_{m-e}$ vanish on $Z$.

As in \cref{highonv}, we define $\partial_i$, $s$ and $D_i$ such that for $f\in I_d'\cong I_d$, the hypersurface section $H_f\cap U$ fails to be smooth of dimension $m-1$ at some point $P\in V_e$ if and only if $(D_1 f)(P)=\ldots=(D_m f)(P)=0$. 

Let $\tau =\max\limits_{1\leq i\leq {l_e}-1} (\deg t_i)$ and $\gamma=\lfloor(d-\tau)/p\rfloor$ where $l_e=\dim V_e$. We select $f_0\in I'_d$ and $g_1\in S'_{\gamma},\ldots,g_{l_e-1}\in S'_{\gamma}$ uniformly and independently at random. Then the distribution of 
$$f=f_0+g_1^pt_1 + \ldots +g_{l_e-1}^p t_{l_e}$$
is uniform over $I'_d$, since by assumption, $l_e\leq m-e$.

Now everything works as in the proof of \cref{highonv}: for $i=0,\ldots,l_e$ we define
$$W_i=V_e\cap \left\{D_1 f=\ldots=D_i f=0 \right\},$$
depending only on $f_0, g_1,\ldots,g_i$. Using the induction as in Lemma 2.6 of \cite{poonen}, we show that $W_{l_e}$ is at most of dimension 0 with probability $1-o(1)$, which is what we claimed. 
\end{proof}

\begin{proof}[Proof of \cref{bertinivnotsmooth}]
As in \cref{sec:vsmooth}, we have the inclusions
$$\mathcal P\subseteq \mathcal P_r\subseteq \mathcal P\cup \mathcal Q_r^{\medium}\cup\mathcal Q_{U-V}^{\high}\cap \mathcal Q.$$
By \ref{medium}, \ref{highoffv} and \ref{highvnotsmooth}, the error in our approximation is negligible, and \cref{low} yields \cref{bertinivnotsmooth}.
\end{proof}

\end{document}